\newtheorem{theorem}{Theorem}[section]
\newtheorem{lemma}[theorem]{Lemma}
\newtheorem{prop}[theorem]{Proposition}
\theoremstyle{definition}
\newtheorem{definition}[theorem]{Definition}
\newtheorem{remark}{Remark}[section]
\numberwithin{equation}{section}
\newcommand{\vp}{\varphi}
\newcommand{\om}{\omega}
\newcommand{\U}{\mathcal{U}}
\newcommand{\real}{\mathbb{R}}
\newcommand{\rn}{\real^{n}}
\let\leq\leqslant
\let\geq\geqslant
\numberwithin{equation}{section}
\begin{document}
	
	\title{Criterion for exponential dichotomy of periodic generalized linear differential equations and an application to admissibility.}
	\author{Claudio A. Gallegos\thanks{ Supported by FONDECYT Postdoctorado No 3220147. Universidad de Chile, Departamento de Matem\'aticas. Casilla 653, Santiago, CHILE. E-mail: {\tt  claudio.gallegos.castro@gmail.com}}, Gonzalo Robledo V.\thanks{Partially supported by FONDECYT Regular No 1210733 . Universidad de Chile, Departamento de Matem\'aticas. Casilla 653, Santiago, CHILE. E-mail: {\tt grobledo@uchile.cl}.}
	}
	
	\date{}
	\maketitle

	\begin{abstract}
		In this paper, we provide a necessary and sufficient condition ensuring the property of exponential dichotomy for periodic linear systems of generalized differential equations. This condition allow us to revisit a recent result of admissibility, obtaining an alternative formulation with a particular simplicity.
	\end{abstract}
	
	\smallskip
	{\bf Keywords:} Generalized ordinary differential equations; Kurzweil--Stieltjes integral; Exponential dichotomy; Floquet theory; Periodic linear systems.
	
	\textbf{MSC 2020 subject classification:}  Primary: 34A06, 34D09. Secondary: 34A30.

	\pagestyle{myheadings} \markboth{\hfil C. A. Gallegos, G. Robledo V. \hfil $\hspace{3cm}$ } {\hfil$\hspace{1.5cm}$
		{Dichotomy for $\om$-periodic generalized linear differential equations}
		\hfil}

	\section{Introduction}
	The characterisation and study of the exponential dichotomy property for generalized linear differential equations (GLDEs) started recently in the seminal work of Bonotto, Federson and Santos \cite{BFS}. In that article, the property was introduced for Banach space valued functions and applied to obtain admissibility results for linear inhomogeneous systems having bounded or periodic components. Subsequently, a second step was made by the aforementioned authors in a series of roughness results for the property of dichotomy, see \cite{BFS2}. In addition,  in those articles the authors extend his results to measure differential equations and impulsive differential equations, which are well known particular cases of GLDEs.
	
	The aim of this brief article is to provide a necessary and sufficient condition ensuring the property of exponential dichotomy for $\om$--periodic homogeneous GLDEs. Our approach is performed by using the Floquet theory in the context of GLDEs, which was developed by Schwabik in \cite{SchwF} and \cite{SCHWABIK3}. Moreover, as an application of this characterisation, we provide an alternative perspective for a proposed admissibility result established in \cite{BFS}: An inhomogeneous $\om$--periodic GLDE, with homogeneous part having an exponential dichotomy, possesses a unique $\om$--periodic solution.
	
	The structure of this article is the following: Section 2 reviews fundamental facts and results of GLDEs. Section 3 states the above mentioned necessary and sufficient condition for the exponential dichotomy in the framework of $\om$--periodic GLDEs. Finally, Section 4 reexamines the admissibility result stated in \cite[Prop.4.8]{BFS}.

	
	\section{Notations and recalls}
	Let us commence by establishing the functional setting in which this article is inserted: A subset $P=\{\alpha_0,\alpha_1,...,\alpha_{\nu(P)}\}\subset [a,b]$, with $\nu(P)\in\mathbb{N}$, is said to be a \textit{partition} of $[a,b]\subset \real$ if $\alpha_0=a<\alpha_1<...<\alpha_{\nu(P)}=b$. We denote by  $\mathcal{P}([a,b])$ the set of all partitions of $[a,b]$.
	
	Let $(X,\|\cdot\|_{X})$ be an arbitrary Banach space and $\Phi:[a,b]\to X$ a function. We denote the \textit{variation} of $\Phi$ over the interval $[a,b]$ by
	\[
	\text{var}_{a}^{b}(\Phi):=\sup_{P\in \mathcal{P}([a,b])}\sum_{j=1}^{\nu(P)}\|\Phi(\alpha_{j})-\Phi(\alpha_{j-1})\|_{X}.
	\]
	The vector space consisting of all functions $\Phi\colon [a,b]\to X$ such that $\text{var}_{a}^{b}(\Phi)<\infty$ is denoted by $BV([a,b],X)$, and it is a Banach space with the norm
	\[
	\|\Phi\|_{BV}:=\|\Phi(a)\|_{X}+\text{var}_{a}^{b}(\Phi).
	\]
	We denote by $BV_{loc}(\real,X)$ the set consisting of all functions $\Phi\colon\real\to X$ of locally bounded variation, \emph{i.e.} all functions $\Phi$ such that $\text{var}_{a}^{b}(\Phi)<\infty$ for every compact interval $[a,b]\subset \real$.
	
	A function $\Phi : [a, b] \to  X$ is said to be
	{\it regulated} (see \cite[Section VII.6]{D}) if for every $t \in [a, b)$ the right-sided limit $\Phi(t^{+})$
	exists in $X$, and for every $t \in (a, b]$ the left-sided limit $\Phi(t^{-})$ exists in $X$. We denote by $G([a, b], X)$ the set consisting of all regulated functions from
	$[a, b]$ into $X$. The set  $G([a, b], X)$ endowed with the norm of uniform convergence is a Banach space. Moreover, every  $\Phi\in G([a, b], X)$ is the uniform limit of a sequence of step functions, and the set of discontinuities of $\Phi$ is at most countable. In addition, the following inclusion holds $BV([a,b],X)\subset G([a,b],X)$.  We denote by $G(\real,X)$ the space of all functions $\Phi:\real\to X$ such that the restriction of $\Phi$ to $[a,b]$ belongs to $G([a,b],X)$, for all $[a,b]\subset\real$. Moreover, throughout this paper we will use the following notations
	\[
	\Delta^{+}\Phi(t) := \Phi(t^{+}) - \Phi(t),\quad \text{ and } \quad \Delta^{-}\Phi(t) := \Phi(t ) - \Phi(t^{-}).
	\] 
	
	Given $\om>0$, we said that a function $\Phi\in G(\real,X)$ is $\om$-periodic if $\Phi(\om+t)-\Phi(t)=0$ for all $t\in\real$. We consider the vector normed space $G_{\om}(\real,X):=\{\Phi\in G(\real,X): \Phi\;\text{is $\om$-periodic}\}$, endowed with the norm $\displaystyle\|\Phi\|_{\om}:=\sup_{t\in[0,\om]}\|\Phi(t)\|$. Clearly, every function $\Phi$ in $G_{\om}(\real,X)$ is uniformly bounded on $\real$ and $(G_{\om}(\real,X),\|\cdot\|_{\om})$ is a Banach space.

	In the next, let us introduce notation in order to define the concept of integration which will be considered throughout this work. As usual, we denote by $\mathcal{L}(\rn)$ the vector space consisting of all $n\times n$-matrices endowed with the operator norm, and by $I$ the $n\times n$ - identity matrix.
	\begin{itemize}
		\item The finite collection of point-interval pairs  $(P,\tau):=\{(\tau_j,[\alpha_{j-1},\alpha_j]): j=1,...,\nu(P)\}$, where $P\in\mathcal{P}([a,b])$ and $\tau_j\in[\alpha_{j-1},\alpha_j]$, is called a \emph{tagged partition} of $[a,b]$.
		\item {Any positive function} $\delta:[a,b]\to\mathbb{R}^{+}$ is called a {\itshape gauge} on $[a,b]$.
		\item If $\delta$ is a gauge on $[a, b]$, a
		tagged partition $(P,\tau)$ is called  $\delta$-{\itshape fine} if
		\[
		[\alpha_{j-1},\alpha_j] \subset \left(\tau_j-\delta(\tau_j), \tau_j+\delta(\tau_j)\right), \; \; j=1, \ldots ,\nu(P).
		\]
		
	\end{itemize}

	\begin{definition}\label{Kint}
		A function $f\colon[a,b]\to\real^{n}$ is said to be Kurzweil--Stieltjes integrable with respect to the function $A \colon [a,b]\to\mathcal{L}(\rn)$ on $[a,b]$, if there is an element $ \mathcal{X} \in \rn$ having the following property:
		for every $\varepsilon>0$, there is a gauge $\delta(\cdot)$ on $[a,b]$ such that
		\[
		\left\|\sum_{j=1}^{\nu(P)}[A(\alpha_j)-A(\alpha_{j-1})]f(\tau_j)- \mathcal{X} \right\|<\varepsilon,
		\]
		for all $\delta$--fine tagged partition $(P,\tau)$ of $[a,b]$. In this case, $\mathcal{X}$ is called the \emph{Kurzweil--Stieltjes integral} of $f$ with respect to $A$ over $[a,b]$ and will be denoted by $\int_{a}^{b}{\rm d}[A(s)]f(s)$.
		
		If $\int_{a}^{b}{\rm d}[A(s)]f(s)$ exists then we define $\int_{b}^{a}{\rm d}[A(s)]f(s)=-\int_{a}^{b}{\rm d}[A(s)]f(s)$ and  for every $c \in[a,b]$ we set $\int_{c}^{c}{\rm d}[A(s)]f(s)=0$.
	\end{definition}
	
	Throughout this work, we will always considering this type of integral. 
	\begin{remark}
		We emphasize that the Kurzweil--Stieltjes integral $\int_{a}^{b}{\rm d}[A(s)]f(s)$ exists when at least one of the function involved is regulated and the other is of bounded variation. For a list of basic properties of this concept of integration, we refer to the reader the articles \cite{MT2,MT1,SCHWABIK} and the book \cite{MST}, along with the references there.  
	\end{remark}

	\subsection{Generalized linear differential equations}
	The Kurzweil--Stieltjes integral is an indispensable concept in the theory of GLDEs, which are essentially determined through an integral equation involving this notion of integration, see for instance \cite[Chapter~VI]{SCHWABIK1} or \cite[Chapter~7]{MST}. With no intention to do an exhaustive list of preliminary results, we will only mention here the most related to the subsequent sections. For a detailed summary of GLDEs, the reader can consult the books \cite{MST,SCHWABIK1,SCHWABIK3}.
	
	Throughout this article, we will always consider a function $A:\real\to\mathcal{L}(\rn)$ in $BV_{loc}(\real,\mathcal{L}(\rn))$, a function $f:\real\to\rn$ in $G(\real,\rn)$, and  we assume that the following condition holds:
	\begin{itemize}
		\item [{\bf (H)}] the matrices $[I-\Delta^{-}A(t)]$ and $[I+\Delta^{+}A(t)]$ are invertible for all $t\in\real$.
	\end{itemize} 
	
	Consider the GLDE of type
	\begin{equation}\label{nhGLDE}
	\dfrac{dx}{d\tau}=D[A(t)x+f(t)],
	\end{equation}
	and the homogeneous GLDE
	\begin{equation}\label{hGLDE}
	\dfrac{dx}{d\tau}=D[A(t)x].
	\end{equation}
	Similarly to the classical theory of nonautonomous linear systems, there exists a uniquely determined $n\times n$ - matrix valued function $U:\real\times\real\to\mathcal{L}(\rn)$, which is the transition matrix associated to the homogeneous GLDE \eqref{hGLDE} and satisfies
	\begin{equation}\label{U}
	U(t,s)=I+\int_{s}^{t}{\rm d}[A(r)]U(t,r), \; \; \text{ for all $t,s\in\real$.}
	\end{equation}
	Moreover, the transition matrix has the following properties:
	\begin{itemize}
		\item[{\rm (a)}] $U(t,t)=I$, for all $t\in \real$.
		\item[{\rm (b)}] For every compact interval $[a,b]\subset \real$, there exists a constant $M\geq0$ such that 
		\begin{multicols}{2}
			\begin{itemize}
				\item[{\rm(i)}] $\|U(t,s)\|\leq M$ for all $t,s\in[a,b]$,
				\item[{\rm(ii)}] {\rm var}$_{a}^{b}(U(t,\cdot))\leq M$ for $t\in[a,b]$,
				\item[{\rm(iii)}] {\rm var}$_{a}^{b}(U(\cdot,s))\leq M$ for $s\in[a,b]$,
			\end{itemize}
		\end{multicols}
		\item[{\rm (c)}] For every $r,s,t\in \real$ the relation $U(t,s)=U(t,r)U(r,s)$ holds.
		\item [{\rm (d)}] $U(t,s)\in\mathcal{L}(\rn)$ is invertible for every $s,t\in \real$, and the relation $[U(t,s)]^{-1}=U(s,t)$ holds.
		\item [{\rm (e)}] For $s,t\in \real$, the following identities hold
		\begin{multicols}{2}
			\begin{itemize}
				\item [{\rm(i)}] $U(t^{+},s)=[I+\Delta^{+}A(t)]U(t,s)$,
				\item [{\rm(ii)}] $U(t^{-},s)=[I-\Delta^{-}A(t)]U(t,s)$,
				\item [{\rm(iii)}] $U(t,s^{+})=U(t,s)[I+\Delta^{+}A(t)]^{-1}$,
				\item [{\rm(iv)}] $U(t,s^{-})=U(t,s)[I-\Delta^{-}A(t)]^{-1}$.
			\end{itemize}
		\end{multicols}
		
	\end{itemize}
	\begin{remark}
		The Eq. \eqref{U} should be understood in the sense that for every $k\in\{1,2,...,n\}$, the following equality holds
		\[
		U_{k}(t,s)=e_{k}+\int_{s}^{t}{\rm d}[A(r)]U_{k}(t,r), \; \; \text{ for all $t,s\in\real$},
		\]
		where $U_{k}$ denotes the k-th column of $U$, and $e_{k}$ is the k-th column of the identity matrix $I$.
	\end{remark}
	
	We finalize this subsection by recalling the variation of constants formula for GLDEs \eqref{nhGLDE}. 
	
	\begin{theorem}
		Assume that $A\in BV_{loc}(\real,\mathcal{L}(\rn))$, $f\in G(\real,\rn)$, and {\rm (H)} holds. Then for every $(x_0,s_0)\in \rn\times \real$, there exists a unique solution of \eqref{nhGLDE} with initial condition $x(s_0)=x_0$, that can be written in the form 
		\begin{equation}\label{vcf}
		x(t,s_0,x_0)=U(t,s_0)x_0 + f(t)-f(s_0)-\int_{s_0}^{t}{\rm d}[U(t,s)](f(s)-f(s_0)),
		\end{equation}
		for all $t\in \real$, where $U$ is the transition matrix defined by \eqref{U}.
	\end{theorem}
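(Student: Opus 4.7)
My plan is to split the proof into two pieces: existence--uniqueness, and verification of the closed--form expression~\eqref{vcf}. The first piece is a standard application of the general theory of GLDEs: under the hypotheses $A\in BV_{loc}(\real,\mathcal{L}(\rn))$, $f\in G(\real,\rn)$, and~{\bf (H)}, the initial value problem for~\eqref{nhGLDE} with $x(s_{0})=x_{0}$ is equivalent to the Kurzweil--Stieltjes integral equation
\begin{equation*}
x(t)=x_{0}+\int_{s_{0}}^{t}{\rm d}[A(s)]x(s)+f(t)-f(s_{0}),
\end{equation*}
and existence and uniqueness of a solution in $G(\real,\rn)$ follow from the classical results recalled in~\cite{SCHWABIK1,MST}; the invertibility condition~{\bf (H)} is precisely what guarantees well--posedness at the jump points of $A$.

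For the verification of~\eqref{vcf}, I would fix $t\in\real$ and consider the auxiliary function $h(s):=x(s)-f(s)+f(s_{0})$, which satisfies $h(s_{0})=x_{0}$ and, by the integral equation above, formally ${\rm d}h(s)={\rm d}[A(s)]x(s)$. Applying the integration--by--parts (substitution) theorem for the Kurzweil--Stieltjes integral to the product $s\mapsto U(t,s)\,h(s)$, together with the backward identity
\begin{equation*}
{\rm d}_{s}[U(t,s)]=-U(t,s)\,{\rm d}[A(s)],
\end{equation*}
which follows by differentiating the cocycle relation $U(t,s)U(s,s_{0})=U(t,s_{0})$ of property~(c) and using~\eqref{U}, one obtains
\begin{equation*}
U(t,t)h(t)-U(t,s_{0})h(s_{0})=\int_{s_{0}}^{t}U(t,s)\,{\rm d}[A(s)](f(s)-f(s_{0})).
\end{equation*}
Since $U(t,t)=I$ and $h(s_{0})=x_{0}$, solving for $x(t)$ yields
\begin{equation*}
x(t)=U(t,s_{0})x_{0}+f(t)-f(s_{0})+\int_{s_{0}}^{t}U(t,s)\,{\rm d}[A(s)](f(s)-f(s_{0})),
\end{equation*}
and one more application of the backward identity rewrites the last integral as $-\int_{s_{0}}^{t}{\rm d}[U(t,s)](f(s)-f(s_{0}))$, giving~\eqref{vcf}.

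The delicate step is the careful bookkeeping of jump corrections in the integration--by--parts formula: the substitution theorem for Kurzweil--Stieltjes integrals produces extra terms of the form $\Delta^{\pm}U(t,\cdot)\cdot\Delta^{\pm}(f-f(s_{0}))$ at common discontinuities of $U(t,\cdot)$ and $f$, and one must verify, using property~(e) together with hypothesis~{\bf (H)}, that these corrections are consistent with the jumps implicit in the formal identity ${\rm d}h={\rm d}[A]x$. Once this matching is established, the chain of equalities above becomes rigorous and~\eqref{vcf} follows.
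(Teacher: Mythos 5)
The paper does not prove this theorem at all: it is explicitly \emph{recalled} as a known result (the variation--of--constants formula for GLDEs, see \cite[Chapter~VI]{SCHWABIK1} and \cite[Chapter~7]{MST}), so there is no in-paper argument to compare yours against. Your outline is the classical derivation from those references, and its skeleton is correct: existence and uniqueness under {\bf (H)} is standard, the auxiliary function $h(s)=x(s)-f(s)+f(s_{0})$ is the right reduction, and the final chain of identities does reproduce \eqref{vcf}.

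That said, the two ingredients you label ``formal'' are precisely where the entire content of the proof lives in the Kurzweil--Stieltjes setting, and you defer both. First, the backward identity ${\rm d}_{s}[U(t,s)]=-U(t,s)\,{\rm d}[A(s)]$ is not literally correct at the discontinuity points of $A$: the function $s\mapsto U(t,s)$ solves an adjoint-type equation in which one-sided limits enter through property~(e)(iii)--(iv), and the rigorous form of the substitution ${\rm d}_{s}[U(t,s)]g(s)\rightsquigarrow -U(t,s)\,{\rm d}[A(s)]g(s)$ under the integral sign requires the substitution theorem for KS integrals, not pointwise differentiation of the cocycle relation. Second, the integration-by-parts formula for $s\mapsto U(t,s)h(s)$ produces correction terms of the form $\sum\Delta^{+}U(t,\cdot)\,\Delta^{+}h-\sum\Delta^{-}U(t,\cdot)\,\Delta^{-}h$ at the common discontinuities, and showing that these cancel exactly against the discrepancies introduced in the first step (using {\bf (H)} and property~(e)) is the nontrivial computation; you state that ``one must verify'' this matching but do not verify it. As a roadmap your proposal is faithful to the standard proof, but as written it asserts the conclusion of the hard step rather than establishing it; to make it complete you should either carry out the jump bookkeeping explicitly or replace the argument by the equally standard alternative of substituting the candidate \eqref{vcf} directly into the integral equation $x(t)=x_{0}+\int_{s_{0}}^{t}{\rm d}[A(s)]x(s)+f(t)-f(s_{0})$ and invoking the unsymmetric Fubini and substitution theorems of \cite{MST}.
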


	\subsection{Exponential dichotomy}
	The property of exponential dichotomy for GLDEs was introduced in \cite{BFS} by following the lines of the classical framework and considering Banach space valued functions.  Throughout this article, we will restrict ourselves to the finite dimensional case.
	
	\begin{definition}{\rm (\cite[Def.3.1]{BFS})}
		The homogeneous GLDE \eqref{hGLDE} admits an {\it exponential dichotomy} on $\mathbb{R}$ if there exist positive constants $K,\alpha$ and a projection $P:\rn\to\rn$, i.e. $P^{2}=P$, such that 
		\begin{displaymath}
		\left\{\begin{array}{rcl}
		\|\U(t)P\U^{-1}(s)\| &\leq & Ke^{-\alpha(t-s)}, \quad \textnormal{for all $t,s\in\real$ with $t\geq s$},\\
		\|\U(t)(I-P)\U^{-1}(s)\|&\leq & Ke^{-\alpha(s-t)}, \quad \textnormal{for all $t,s\in\real$ with $s\geq t$}.
		\end{array}\right.		
		\end{displaymath}
		Where the function $\U:\real\to\mathcal{L}(\rn)$ is defined by $\U(t)=U(t,0)$ for all $t\in\real$.
	\end{definition}
	
	To simplify the redaction of the future statements, we will consider the following condition:
	\begin{description}
		\item [{\bf (D)}] The homogeneous GLDE \eqref{hGLDE} admits an exponential dichotomy on $\mathbb{R}$ and  {\rm (H)} holds.
	\end{description}
	
	In the sequel, we recall two results about existence of bounded solutions for GLDEs which depend on the exponential dichotomy property.
	
	\begin{prop}{\rm (\cite[Th.4.3]{BFS})}
		\label{hbs}
		Assume that $A\in BV_{loc}(\real,\mathcal{L}(\rn))$ and condition (D) holds. Then, the unique bounded solution of the homogeneous GLDE \eqref{hGLDE} is the null solution. 
	\end{prop}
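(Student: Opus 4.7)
The proof I have in mind is the classical dichotomy argument adapted to the GLDE setting; since the transition matrix $U(t,s)$ obeys the same group property $U(t,s) = \U(t)\U^{-1}(s)$ as in the smooth case, the standard reasoning transfers almost verbatim.

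Let $x\colon\real\to\rn$ be a bounded solution of \eqref{hGLDE}. By the existence/uniqueness theorem and the fact that every solution of the homogeneous equation has the form $x(t)=U(t,s_0)x(s_0)$, we may write
\[
x(t)=\U(t)\,\U^{-1}(0)\,x(0)=\U(t)\,x(0),\qquad t\in\real.
\]
The plan is to decompose $x(0)=P\,x(0)+(I-P)\,x(0)$ and show that each piece is zero using the two dichotomy inequalities, taking advantage of the fact that $\|x(\cdot)\|$ is uniformly bounded on $\real$ by some constant $C>0$.

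For the unstable component, observe that for every $s\ge 0$ one has
\[
(I-P)\,x(0)=\U(0)(I-P)\,\U^{-1}(0)\,x(0)=\U(0)(I-P)\,\U^{-1}(s)\,x(s),
\]
by multiplying and cancelling $\U(s)\U^{-1}(s)=I$. Applying the second inequality of the exponential dichotomy with $t=0\le s$ yields
\[
\|(I-P)\,x(0)\|\leq \|\U(0)(I-P)\,\U^{-1}(s)\|\,\|x(s)\|\leq K\,C\,e^{-\alpha s},
\]
and letting $s\to+\infty$ gives $(I-P)\,x(0)=0$. The same trick applied to $P\,x(0)=\U(0)P\,\U^{-1}(s)\,x(s)$ with $s\le 0$ (so that $t=0\ge s$) together with the first dichotomy inequality gives $\|P\,x(0)\|\le KC\,e^{\alpha s}\to 0$ as $s\to-\infty$, hence $P\,x(0)=0$.

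Combining the two equalities yields $x(0)=0$, and by the uniqueness part of the variation of constants theorem (with $f\equiv 0$) we conclude $x\equiv 0$. I do not foresee a genuine obstacle here: the argument is essentially algebraic once the group identity $\U(t)\U^{-1}(s)=U(t,s)$ and the two dichotomy bounds are in place, and the only minor care needed is to ensure that the boundary cases ($t=s=0$) of the dichotomy inequalities are admissible, which they are since the definition allows equality.
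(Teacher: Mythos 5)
Your argument is correct and is the standard proof of this result: the paper itself states the proposition without proof, citing \cite[Th.4.3]{BFS}, and the argument there is essentially the one you give (decompose $x(0)$ into $Px(0)+(I-P)x(0)$ and kill each piece by evaluating the dichotomy estimates at $t=0$ and letting $s\to\pm\infty$ along the bounded orbit). No gaps; the identities $\U(0)=I$ and $x(s)=\U(s)x(0)$ justify every step you use.
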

	
	The next result relies on certain integrability assumptions for the Kurzweil--Stieltjes integral on unbounded intervals. These hypotheses are not necessary in the classical nonautonomous linear framework, because the exponential dichotomy implies that assumptions. For a detailed discussion, we refer to  Remarks~4.9 to 4.11 in \cite{BFS}.  
	
	\begin{prop}{\rm (\cite[Th.4.5]{BFS})}
		\label{nhbs}
		Assume that $A\in BV_{loc}(\real,\mathcal{L}(\rn))$, $f\in G(\real,\rn)$ is bounded, and condition (D) holds. Assume further that the Kurzweil--Stieltjes integrals
		\[
		W_1(t)=\int_{-\infty}^{t}{\rm d}[\U(t)P\U^{-1}(s)]{(f(s)-f(0))}, \text{ and }\; W_2(t)=\int_{t}^{\infty}{\rm d}[\U(t)(I-P)\U^{-1}(s)]{(f(s)-f(0))}
		\]
		exist for all $t\in\real$, and the functions $W_{i}:\real\to\rn$ are bounded, for $i=1,2$. Then the GLDE \eqref{nhGLDE} admits a unique bounded solution. 
	\end{prop}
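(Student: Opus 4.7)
My plan is to adapt the classical two-step strategy for inhomogeneous linear systems under exponential dichotomy to the Kurzweil--Stieltjes framework. Uniqueness is immediate from Proposition \ref{hbs}: if $x_{1}, x_{2}$ are two bounded solutions of \eqref{nhGLDE}, then $x_{1}-x_{2}$ is a bounded solution of the homogeneous system \eqref{hGLDE}, which the dichotomy condition in (D) forces to vanish identically.

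For existence, I would write down an explicit bounded candidate mimicking the classical splitting formula for the bounded solution,
$$
x^{*}_{\mathrm{cl}}(t) \;=\; \int_{-\infty}^{t}\U(t)P\U^{-1}(s)\,f(s)\,ds \;-\; \int_{t}^{+\infty}\U(t)(I-P)\U^{-1}(s)\,f(s)\,ds,
$$
transposed to the GLDE setting through the variation of constants formula \eqref{vcf}, which effectively moves the differential from $f$ to $U$ via an integration-by-parts manoeuvre. A natural candidate is
$$
x^{*}(t) \;:=\; (f(t)-f(0)) \;-\; W_{1}(t) \;+\; W_{2}(t), \qquad t\in\real,
$$
which is bounded on $\real$ by the standing hypotheses that $f$, $W_{1}$ and $W_{2}$ are bounded.

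The verification that $x^{*}$ solves \eqref{nhGLDE} reduces to checking \eqref{vcf} for every pair $(s_{0},t)$, namely
$$
x^{*}(t) \;=\; U(t,s_{0})\,x^{*}(s_{0}) \;+\; (f(t)-f(s_{0})) \;-\; \int_{s_{0}}^{t} d[U(t,s)](f(s)-f(s_{0})).
$$
I would proceed by splitting each improper integral $W_{i}(t)$ at the point $s_{0}$ using additivity of the Kurzweil--Stieltjes integral. The cocycle property (c) allows one to factor $\U(t)\U^{-1}(s_{0})=U(t,s_{0})$ out of the tails $(-\infty,s_{0}]$ and $[s_{0},+\infty)$, producing the $U(t,s_{0})x^{*}(s_{0})$ term. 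The bounded pieces on $[s_{0},t]$ then combine, via the identity $\U(t)P\U^{-1}(s)+\U(t)(I-P)\U^{-1}(s)=U(t,s)$, into the Kurzweil--Stieltjes integral appearing in \eqref{vcf}, with the correction $f(t)-f(s_{0})$ recovered from $(f(t)-f(0))-(f(s_{0})-f(0))$.

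The principal obstacle I anticipate is technical rather than conceptual: handling the improper Kurzweil--Stieltjes integrals on $(-\infty,t]$ and $[t,+\infty)$ requires care with Hake-type convergence results, with the one-sided jump corrections propagated through the transition matrix via properties (e), and with the justification for interchanging limits with Kurzweil--Stieltjes integrals. The exponential estimates from the definition of exponential dichotomy are what guarantee absolute convergence and the legitimacy of the splittings at $s_{0}$. Once these manipulations are vindicated, $x^{*}$ is seen to satisfy \eqref{vcf} for every $s_{0}$, and is therefore the unique bounded solution of \eqref{nhGLDE}.
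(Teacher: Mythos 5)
This proposition is not proved in the paper at all; it is quoted from \cite[Th.~4.5]{BFS}, so there is no internal proof to compare against. That said, your plan is correct and is essentially the argument of the cited source. Uniqueness is exactly as you say, via Proposition~\ref{hbs} applied to the difference of two bounded solutions. For existence, your candidate $x^{*}(t)=(f(t)-f(0))-W_{1}(t)+W_{2}(t)$ is precisely the solution of the I.V.P.~\eqref{ivpnh} recorded in Remark~\ref{rivp}: with $s_{0}=0$ in \eqref{vcf}, one pulls the constant left factor $\U(t)$ through the integrator in $U(t,0)x^{*}(0)$, splits $\int_{0}^{t}{\rm d}[U(t,s)](f(s)-f(0))$ along $P$ and $I-P$, and reassembles $-W_{1}(t)+W_{2}(t)$ by additivity of the Kurzweil--Stieltjes integral over subintervals; the signs check out. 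One economy you could make: it suffices to verify \eqref{vcf} at the single base point $s_{0}=0$, since that already identifies $x^{*}$ as the unique solution through $(0,x^{*}(0))$; verifying every pair $(s_{0},t)$ is redundant. The technical issues you flag (convergence and splitting of the improper integrals) are real but mild here, because the existence and boundedness of $W_{1},W_{2}$ are standing hypotheses rather than something to be derived from the dichotomy estimates.
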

	
	\begin{remark}\label{rivp}
		To be precise, in Proposition~\ref{nhbs} is proved that the bounded solution of the GLDE \eqref{nhGLDE} is exactly the unique solution of the I.V.P 
		\begin{equation}\label{ivpnh}
		\left\{\begin{array}{l}
		\dfrac{dx}{d\tau}=D[A(t)+f(t)]\\
		\displaystyle x(0)=-\int_{-\infty}^{0}{\rm d}[P\U^{-1}(s)]{(f(s)-f(0))}+\int_{0}^{\infty}{\rm d}[(I-P)\U^{-1}(s)]{(f(s)-f(0))}.
		\end{array}\right.    
		\end{equation}
	\end{remark}
	
	\section{Exponential dichotomy in the $\omega$--periodic case}
	
	It is well known that, in the ODE context,
	the characterisation of the exponential dichotomy for linear $\omega$--periodic systems can be achieved by using the Floquet theory. Nevertheless, to the best of our knowledge, this approach has not been explored in the generalized framework, and will be the aim of this section.
	
	We recall the pivotal result of the Floquet theory for GLDEs, which has been established by Schwabik in \cite{SchwF} and \cite{SCHWABIK3}. 
	\begin{theorem}\label{Floquet}
		Assume that $A\in BV_{loc}(\real,\mathcal{L}(\rn))$, condition {\rm (H)} holds and $A(t+\om)-A(t)=C$, for all $t\in J$, where $\om>0$ and $C\in\mathcal{L}(\rn)$ is a constant matrix. Then there exist a $n\times n$ - matrix valued function $G\colon \real \to \mathcal{L}(\rn)$ which is $\om$-periodic and a constant  matrix $Q\in\mathcal{L}(\rn)$, which is not uniquely determined, such that
		\begin{equation}
		\label{MFF}
		\U(t)=G(t)e^{Qt},\qquad \text{ for all $t\in \real$},
		\end{equation}
		where $Q=\frac{1}{\om}\ln\left(\U(\om)\right)$. 
	\end{theorem}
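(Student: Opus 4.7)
The plan is to follow the classical Floquet strategy, adapted to the generalized setting: first establish a semigroup-type identity for $\U$ over one period, then use the invertibility of $\U(\om)$ to introduce the constant matrix $Q$ via a matrix logarithm, and finally define $G$ by peeling off the exponential factor and verifying its $\om$-periodicity.

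The crucial step I would carry out first is the identity
\[
\U(t+\om)=\U(t)\,\U(\om)\qquad\text{for all } t\in\real.
\]
By property (c) of the transition matrix, $\U(t+\om)=U(t+\om,0)=U(t+\om,\om)\,U(\om,0)=U(t+\om,\om)\,\U(\om)$, so it suffices to show $U(t+\om,\om)=U(t,0)=\U(t)$. For this, I would verify that $t\mapsto U(t+\om,\om)$ satisfies the defining integral equation \eqref{U} with initial point $0$: applying the change of variable $r=\sigma+\om$ in
\[
U(t+\om,\om)=I+\int_{\om}^{t+\om}{\rm d}[A(r)]\,U(t+\om,r),
\]
together with the hypothesis $A(\sigma+\om)-A(\sigma)=C$ (so that the increments of $A(\cdot+\om)$ and $A(\cdot)$ agree, which is what the Kurzweil--Stieltjes integrator sees), yields
\[
U(t+\om,\om)=I+\int_{0}^{t}{\rm d}[A(\sigma)]\,U(t+\om,\sigma+\om).
\]
By the uniqueness of the transition matrix for \eqref{hGLDE}, this forces $U(t+\om,\sigma+\om)=U(t,\sigma)$, and in particular $U(t+\om,\om)=\U(t)$, establishing the claim.

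Next, by property (d), $\U(\om)$ is invertible, so a matrix logarithm exists (in general over $\mathbb{C}$, and not uniquely, because different branches and/or $2\pi i$ shifts yield different logarithms). Setting
\[
Q:=\tfrac{1}{\om}\ln(\U(\om)),\qquad G(t):=\U(t)\,e^{-Qt},
\]
one has by construction $\U(t)=G(t)\,e^{Qt}$, which is \eqref{MFF}. The $\om$-periodicity of $G$ is then immediate from the identity derived above:
\[
G(t+\om)=\U(t+\om)\,e^{-Q(t+\om)}=\U(t)\,\U(\om)\,e^{-Q\om}\,e^{-Qt}=\U(t)\,e^{-Qt}=G(t),
\]
using that $e^{Q\om}=\U(\om)$. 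The non-uniqueness of $Q$ simply reflects the non-uniqueness of the logarithm.

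The main obstacle is the first step: justifying the change of variable $r=\sigma+\om$ inside the Kurzweil--Stieltjes integral and showing that the integrator ${\rm d}[A(\sigma+\om)]$ acts identically to ${\rm d}[A(\sigma)]$ on regulated integrands. This is essentially a consequence of the invariance of the Kurzweil--Stieltjes integral under translations of the integrator by a constant (here $C$) and under the substitution of the dummy variable on a tagged partition, both standard in \cite{MST,SCHWABIK1}; everything else is formal manipulation once the semigroup identity over one period is in hand.
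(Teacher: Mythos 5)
The paper does not prove Theorem~\ref{Floquet} at all: it is quoted verbatim as a known result of Schwabik (\cite{SchwF}, \cite{SCHWABIK3}), so there is no in-paper argument to compare against. Your proof is the standard Floquet argument transplanted to the GLDE setting, and it is sound. The one step that carries real content is the identity $U(t+\om,s+\om)=U(t,s)$, and you justify it correctly: the Kurzweil--Stieltjes sums involve only the increments $A(\alpha_j)-A(\alpha_{j-1})$, so the integrator $A(\cdot+\om)=A(\cdot)+C$ acts exactly as $A(\cdot)$, and the translation of the dummy variable maps $\delta$-fine tagged partitions of $[\om,t+\om]$ bijectively onto $\tilde\delta$-fine tagged partitions of $[0,t]$ with $\tilde\delta(\sigma)=\delta(\sigma+\om)$; uniqueness of the solution of \eqref{U} then gives the biperiodicity (you should state that the substituted function satisfies the integral equation for every pair $(t,s)$, not just $s=0$, but the computation is identical). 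It is worth noting that the paper obtains the biperiodicity \eqref{biper} in the opposite direction, as a \emph{consequence} of the Floquet representation in Remark~3.2, whereas you derive it directly from the periodicity of $A$ and use it as the engine of the proof --- that is the correct logical order for a self-contained argument and avoids circularity. The remaining steps ($Q=\frac{1}{\om}\ln\U(\om)$ via invertibility of $\U(\om)$ from (H) and property (d), $G(t):=\U(t)e^{-Qt}$, and $G(t+\om)=\U(t)\U(\om)e^{-Q\om}e^{-Qt}=G(t)$) are routine and correct. The only caveat, which you flag and which the paper's own statement shares, is that $\ln\U(\om)$ need not exist as a \emph{real} matrix (e.g.\ when $\U(\om)$ has a negative eigenvalue with odd Jordan multiplicity), so $Q$ and $G$ may have to be taken complex, or the period doubled; this is a defect of the statement as written, not of your argument.
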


	Similarly as in the classical Floquet theory, we will say that $\mathcal{U}(\om)$ is the \textit{monodromy matrix}.
	\begin{remark}
		Assuming the hypotheses of Theorem~\ref{Floquet}, we can deduce that for every $t\in\real$ $G(t)$ is invertible, and the function $G^{-1}\colon \real\to \mathcal{L}(\rn)$ is also $\om$--periodic. Indeed, $G(t)G^{-1}(t)=G(t+\om)G^{-1}(t)=I$ leads to $G^{-1}(t)=G^{-1}(t+\om)$. Furthermore, this fact implies the $\om$--byperiodicity of the transition matrix, i.e. $U(t+\om,s+\om)=U(t,s)$ for all $t,s\in\real$. In fact, 
		\begin{equation}
		\label{biper}
		\begin{split}
		U(t+\om,s+\om)&=\U(t+\om)\U^{-1}(s+\om)\\
		&=G(t+\om)e^{Qt}e^{-Qs}G^{-1}(s+\om)\\
		&=G(t)e^{Q(t+\om)}e^{-Q(s+\om)}G^{-1}(s)\\
		&=\U(t)\U^{-1}(s)=U(t,s). 
		\end{split}
		\end{equation}
	\end{remark}

	\begin{lemma}\label{emm}
		Assume the hyphoteses from Theorem~\ref{Floquet}. If $\rho$ is an eigenvalue of 
		the monodromy matrix such that 
		$U(\omega,0)\xi=\rho \xi$, 
		then any solution $x(\cdot)$ of the $\om$--periodic GLDE \eqref{hGLDE} with initial condition $x(0)=\xi$ verifies
		\begin{equation}
		\label{multiplicador}
		x(t+\omega)=\rho x(t), \quad \textnormal{ for all $t\in \mathbb{R}$}.
		\end{equation}
	\end{lemma}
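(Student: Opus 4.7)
The plan is to express the solution through the transition matrix and then exploit the $\omega$-biperiodicity established in equation \eqref{biper} together with the cocycle property of $U$. Since the homogeneous GLDE \eqref{hGLDE} has a unique solution with $x(0)=\xi$, I would write this solution explicitly as $x(t)=U(t,0)\xi = \mathcal{U}(t)\xi$, which is licensed by the theory recalled in Section~2.

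The core computation is then a three-line chain. First, I would use the cocycle property (c) to split
\[
x(t+\omega)=U(t+\omega,0)\xi = U(t+\omega,\omega)\,U(\omega,0)\xi.
\]
Next, applying the eigenvalue hypothesis $U(\omega,0)\xi=\rho\xi$ yields
\[
x(t+\omega)=\rho\,U(t+\omega,\omega)\xi.
\]
Finally, invoking the biperiodicity $U(t+\omega,s+\omega)=U(t,s)$ with $s=0$, which is precisely \eqref{biper}, gives $U(t+\omega,\omega)=U(t,0)$, so
\[
x(t+\omega)=\rho\,U(t,0)\xi=\rho\,x(t),
\]
valid for every $t\in\mathbb{R}$, which is \eqref{multiplicador}.

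I do not expect a genuine obstacle here: the argument is essentially the classical Floquet multiplier calculation transplanted to the Kurzweil--Stieltjes setting, and every ingredient (existence and uniqueness of the solution, the cocycle identity, and the biperiodicity derived from the Floquet factorisation \eqref{MFF}) is already in place in the excerpt. The only point deserving care is that biperiodicity really does follow from Theorem~\ref{Floquet}, which in turn requires condition (H) so that $\mathcal{U}(\omega)$ is invertible and its logarithm $Q=\frac{1}{\omega}\ln(\mathcal{U}(\omega))$ is well defined; these hypotheses are already assumed in the statement of Lemma~\ref{emm}, so no extra verification is needed.
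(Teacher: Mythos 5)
Your proof is correct, and it is in fact tidier than the one in the paper. Both arguments rest on the same two ingredients --- the cocycle identity $U(t+\om,0)=U(t+\om,\om)U(\om,0)$ and the $\om$--biperiodicity \eqref{biper} --- but you split at the single point $\om$, which lets the eigenvector relation and one application of $U(t+\om,\om)=U(t,0)$ finish the job in one line, uniformly in $t\in\real$. The paper instead first establishes $U(n\om,0)\xi=\rho^{n}\xi$ and $U(0,n\om)\xi=\rho^{-n}\xi$ by induction (and their analogues for negative multiples), then writes $t\in[(n-1)\om,n\om)$ and splits at $n\om$, so that powers of $\rho$ appear and cancel at the end, with the case $t<0$ handled separately. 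That extra machinery buys nothing for this lemma (the intermediate identities are not reused later in an essential way that your route would miss), so your version is a genuine simplification: no induction, no sign case analysis, and no need to locate $t$ in a period window. The only point worth stating explicitly in a written-up version is the one you already flag, namely that $x(t)=U(t,0)\xi$ is the unique solution of the homogeneous initial value problem, which is guaranteed by the variation of constants formula \eqref{vcf} with $f\equiv 0$ under hypothesis (H).
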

	
	\begin{proof}
		By using the $\omega$--biperiodicity (\ref{biper}), it can be proved inductively that
		\begin{equation}
		\label{MF11}
		U(n\omega,0)\xi=\rho^{n}\xi \quad \textnormal{for any $n\in \mathbb{N}$},
		\end{equation}
		which also implies 
		\begin{equation}
		\label{MF12}
		U(0,n\omega)\xi=\left(\frac{1}{\rho}\right)^{n}\xi \quad \textnormal{for any $n\in \mathbb{N}$}.
		\end{equation}
		
		By using the $\omega$--biperiodicity \eqref{biper} we also can prove that
		\begin{displaymath}
		U(-n\omega,0)\xi=\left(\frac{1}{\rho}\right)^{n}\xi, \quad \textnormal{and} \quad U(0,-n\omega)\xi=\rho^{n}\xi, \quad \textnormal{for any $n\in \mathbb{N}$}.
		\end{displaymath}

		Now, without loss of generality, let us assume that $t\geq0$ with $t\in [(n-1)\omega,n\omega)$. Then, by using the $\omega$--biperiodicity \eqref{biper} combined with the
		identities \eqref{MF11} and \eqref{MF12} we have that
		\begin{displaymath}
		\begin{array}{rcl}
		x(t+\omega)&=& U(t+\omega,0)\xi=U(t+\omega,n\omega)U(n\omega,0)\xi \\
		&=& \rho^{n}U(t,[n-1]\omega)\xi\\
		&=& \rho^{n}U(t,0)U(0,[n-1]\omega)\xi  \\
		&=&\rho^{n}\left(\frac{1}{\rho}\right)^{n-1}U(t,0)\xi= \rho x(t),
		\end{array}
		\end{displaymath}
		and the lemma follows.
		
	\end{proof}
	
	The eigenvalues of the monodromy matrix are also called \textit{Floquet multipliers} in the classi\-cal context and have been studied for $\om$--periodic GLDEs in \cite{Hnilica}. In the next, we will prove that the location of the Floquet multipliers on the complex plane provide a necessary and sufficient condition to determine if the property of exponential dichotomy on $\mathbb{R}$ is verified by the homogeneous GLDE \eqref{hGLDE}.

	\begin{theorem}
		\label{NSC}
		Assume the hyphoteses from Theorem~\ref{Floquet}. The $\om$--periodic GLDE \eqref{hGLDE} admits an {\it exponential dichotomy}
		on $\mathbb{R}$ if and only if the eigenvalues of the monodromy matrix
		are not in the unit circle.
	\end{theorem}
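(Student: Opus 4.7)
The plan is to prove both implications separately, using the Floquet factorization $\U(t)=G(t)e^{Qt}$ supplied by Theorem~\ref{Floquet} together with the observation, made in the remark following that theorem, that $G$ and $G^{-1}$ are $\om$--periodic regulated matrix functions, hence uniformly bounded on $\real$ by a common constant $M>0$.

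The necessity of the spectral condition is the easier direction and I would argue it by contradiction. Suppose $\rho$ is an eigenvalue of $\U(\om)$ with $|\rho|=1$ and let $\xi\neq 0$ be a corresponding eigenvector. Lemma~\ref{emm} applied to the solution $x(\cdot)$ of \eqref{hGLDE} with $x(0)=\xi$ yields $x(t+\om)=\rho x(t)$ for every $t\in\real$, so $\|x(t+\om)\|=\|x(t)\|$. Since $x(t)=\U(t)\xi$ is regulated and therefore bounded on $[0,\om]$, this identity propagates boundedness to all of $\real$, producing a nontrivial bounded solution of \eqref{hGLDE} that contradicts Proposition~\ref{hbs}.

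For the sufficiency, assume that no eigenvalue of $\U(\om)=e^{Q\om}$ lies on the unit circle, which is equivalent to $\textnormal{Re}(\lambda)\neq 0$ for every $\lambda\in\sigma(Q)$. This yields a $Q$--invariant splitting $\rn=E^{s}\oplus E^{u}$, where $E^{s}$ (resp.\ $E^{u}$) is the sum of the generalized eigenspaces of $Q$ associated with eigenvalues of negative (resp.\ positive) real part. Let $P$ be the projection onto $E^{s}$ along $E^{u}$; then $PQ=QP$, whence $P e^{Qt}=e^{Qt}P$, and standard linear algebra supplies constants $K_{0},\alpha>0$ such that
\[
\|e^{Qt}P\|\leq K_{0}e^{-\alpha t}\;(t\geq 0),\qquad \|e^{Qt}(I-P)\|\leq K_{0}e^{\alpha t}\;(t\leq 0).
\]
Using $\U^{-1}(s)=e^{-Qs}G^{-1}(s)$ together with the commutation just established, a direct computation gives
\[
\U(t)P\,\U^{-1}(s)=G(t)\,P e^{Q(t-s)}\,G^{-1}(s),
\]
and analogously for $I-P$. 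Combining the above exponential bounds with the uniform bound $M$ on both $G$ and $G^{-1}$, the exponential dichotomy estimates follow with constant $K:=M^{2}K_{0}$ and the same rate $\alpha$; moreover $P^2=P$ by construction.

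The main obstacle is conceptual rather than computational: one has to identify the correct projection $P$ from the spectral decomposition of $Q$, verify that it commutes with $e^{Qt}$, and check that the $\om$--periodicity of $G$ and $G^{-1}$ is enough to absorb the $t,s$--dependent Floquet factors into a single multiplicative constant. Once this structural observation is in place, the estimates reduce to the standard autonomous linear algebra governing $e^{Qt}$, and the nontrivial content of the theorem sits in the necessity direction, where Lemma~\ref{emm} manufactures a bounded solution whenever a Floquet multiplier lies on the unit circle.
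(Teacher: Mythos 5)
Your proposal is correct and follows essentially the same route as the paper: necessity via Lemma~\ref{emm} producing a nontrivial bounded solution that contradicts Proposition~\ref{hbs}, and sufficiency via the Floquet factorization $\U(t)=G(t)e^{Qt}$ reducing the estimates to the autonomous dichotomy of $\dot y=Qy$ with the periodic factors $G$, $G^{-1}$ absorbed into the constant. The only cosmetic difference is that you construct the projection and the bounds on $e^{Qt}P$ explicitly from the spectral splitting of $Q$, where the paper simply cites the classical result (Sacker--Sell) for the dichotomy of $\dot y=Qy$.
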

	
	\begin{proof}
		Firstly, we will assume that any eigenvalue $\lambda$ of the monodromy matrix $U(\omega,0)=\U(\omega)$ verifies $|\lambda|\neq 1$. It is easy to see that this implies that any eigenvalue of $Q$ has real part different from zero, which implies that (see \emph{e.g.} \cite[p.430]{Sacker}) the ODE system
		$$
		\dot{y}=Qy
		$$
		has an exponential dichotomy on $\mathbb{R}$ in the classical sense, namely, there exists two constants $K>0$, $\alpha>0$ and a projector $P$ such that
		\begin{displaymath}
		\left\{\begin{array}{rcl}
		\|e^{Qt}Pe^{-Qs}\|\leq Ke^{-\alpha (t-s)}  &\textnormal{if}&  t\geq s \\ \|e^{Qt}(I-P)e^{-Qs}\|\leq Ke^{-\alpha (s-t)}  &\textnormal{if}&  s\geq t.
		\end{array}\right.
		\end{displaymath}
		
		Now, by using Eq. \eqref{MFF} we can see that
		\begin{displaymath}
		\begin{array}{rcl}
		\|\U(t)P\U^{-1}(s)\| &=& \|G(t)e^{Qt}Pe^{-Qs}G^{-1}(s)\| \\
		&\leq& K_{1}e^{-\alpha(t-s)},
		\end{array}
		\end{displaymath}
		for any $t\geq s$, where $K_{1}=K\max\limits_{r\in [0,\omega]}\|G(r)\|\, \max\limits_{r\in [0,\omega]}\|G^{-1}(r)\|$ is well defined since 
		$G$ and its inverse are bounded on $[0,\omega]$.
		
		Similarly, it can be proved that 
		\begin{displaymath}
		\|\U(t)(I-P)\U^{-1}(s)\| \leq K_{1}e^{-\alpha(s-t)},
		\end{displaymath}
		for any $t\leq s$, and the exponential dichotomy of the homogeneous GLDE \eqref{hGLDE} on $\mathbb{R}$ follows.
		
		On the other hand, let us assume that the homogeneous GLDE \eqref{hGLDE} has an exponential dichotomy on $\mathbb{R}$. We will prove that the eigenvalues of the monodromy matrix are not in the unit circle. By contradiction, let us suppose that the monodromy matrix $U(\omega,0)$ associated to the homogeneous GLDE \eqref{hGLDE} has an eigenvalue $\rho$ satisfying $|\rho|=1$. 
		
		Let $\xi$ be an eigenvector such that $U(\omega,0)\xi=\rho \xi$. Using Lemma~\ref{emm} we deduce that the I.V.P
		\begin{equation}\label{ivph+flo}
		\left\{\begin{array}{l}
		\dfrac{dx}{d\tau}=D[A(t)x]\\
		x(0)=\xi
		\end{array}\right.    
		\end{equation}
		has a unique solution defined on $\real$ verifying the Eq. \eqref{multiplicador}. 
		
		Without loss of generality, let us assume that $t\geq0$. Therefore, we can write $t=n\om+s$ with $s\in[0,\om]$, for some $n\in\mathbb{N}\cup\{0\}$. Now, since $U(n\omega,0)\xi=\rho^{n}\xi$ for all $n\in\mathbb{N}\cup\{0\}$, we obtain
		\[
		\|x(t)\|=\|x(n\om+s)\|=|\rho^{n}|\,\|x(s)\|\leq \max\limits_{s\in [-\om,\om]}\|x(s)\|.
		\]
		A similar estimation is deduced considering $t<0$. Thus, we have been obtained a non-zero bounded solution of the homogeneous I.V.P \eqref{ivph+flo}, which contradicts Proposition~\ref{hbs}. 
	\end{proof}

	\section{Revisiting an admissibility result for $\omega$--periodic solutions}
	In the article \cite{BFS}, it was established a result for the existence of a unique $\om$-periodic solution of the inhomogeneous GLDE \eqref{nhGLDE} whenever the functions $A(\cdot)$ and $f(\cdot)$ are $\om$-periodic and the exponential dichotomy property holds. Let us recall this result:
	\begin{prop}{\rm (\cite[Prop.4.8]{BFS})}
		\label{psol}
		Assume the hypotheses from Proposition~\ref{nhbs} and let $C\in\mathcal{L}(\rn)$ be a constant matrix. Suppose that the proyection $P$ is uniquely determined, $A(t+\om)-A(t)=C$, for all $t\in J$, where $\om>0$, and the function $f$ belongs to $G_{\om}(\real,\rn)$. Then the inhomogeneous GLDE \eqref{nhGLDE} admits a unique $\om$-periodic solution.  
	\end{prop}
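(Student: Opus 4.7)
The strategy is to invoke Proposition~\ref{nhbs} to obtain the unique bounded solution $x^{*}$ of \eqref{nhGLDE}, and then to show that $x^{*}$ is already $\omega$--periodic. Uniqueness of the periodic solution will follow automatically: every $\omega$--periodic regulated function is bounded on $\real$ (being bounded on the compact interval $[0,\omega]$), so any $\omega$--periodic solution must coincide with the unique bounded one.

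To establish $\omega$--periodicity, I would set $y(t):=x^{*}(t+\omega)$ and prove that $y$ is again a bounded solution of \eqref{nhGLDE}; Proposition~\ref{nhbs} then forces $y=x^{*}$, that is, $x^{*}(t+\omega)=x^{*}(t)$ for all $t\in\real$. Writing the integral form of \eqref{nhGLDE} for $x^{*}$ between $s+\omega$ and $t+\omega$ and using the $\omega$--periodicity of $f$, one obtains
$$y(t)-y(s)=\int_{s+\omega}^{t+\omega}{\rm d}[A(r)]\,x^{*}(r)+f(t)-f(s).$$
A translation $r\mapsto r+\omega$ in the Kurzweil--Stieltjes integral, combined with the fact that $A(r+\omega)-A(r)\equiv C$ is constant and therefore contributes no increment to the integrator, yields
$$\int_{s+\omega}^{t+\omega}{\rm d}[A(r)]\,x^{*}(r)=\int_{s}^{t}{\rm d}[A(r)]\,y(r),$$
so that $y$ solves the same integral equation as $x^{*}$. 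Boundedness of $y$ is immediate from that of $x^{*}$, and the conclusion follows.

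The step I expect to be the main obstacle is rigorously justifying the translation inside the Kurzweil--Stieltjes integral. This reduces to two observations: tagged $\delta$--fine partitions of $[s+\omega,t+\omega]$ correspond, via $r\mapsto r-\omega$, to tagged $\widetilde{\delta}$--fine partitions of $[s,t]$ with $\widetilde{\delta}(u):=\delta(u+\omega)$; and the Riemann--Stieltjes sums depend only on the increments $A(\alpha_j)-A(\alpha_{j-1})$, which are invariant under the shift since the constant matrix $C$ cancels. An alternative route that avoids this direct manipulation is to use the variation of constants formula \eqref{vcf} at $s_{0}=0$ to express $x^{*}(t+\omega)$ and then invoke the $\omega$--biperiodicity \eqref{biper} of the transition matrix $U$ together with the $\omega$--periodicity of $f$; the verification then boils down to showing $x^{*}(\omega)=x^{*}(0)$ from the explicit initial value in Remark~\ref{rivp}, which is again a change of variable in the two improper Kurzweil--Stieltjes integrals.
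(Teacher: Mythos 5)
Your argument is correct; note first that the paper itself does not prove Proposition~\ref{psol} --- it is recalled verbatim from \cite[Prop.~4.8]{BFS} --- so the comparison is with the paper's surrounding machinery rather than with a proof of this exact statement. Your two pillars are sound: Proposition~\ref{nhbs} applies because an $\om$-periodic regulated $f$ is bounded and the hypotheses on $W_1,W_2$ are assumed; and the shift $y(t)=x^{*}(t+\om)$ is again a bounded solution, since $f(t+\om)-f(s+\om)=f(t)-f(s)$ and the integrator increments satisfy $A(\alpha_j+\om)-A(\alpha_{j-1}+\om)=A(\alpha_j)-A(\alpha_{j-1})$ (the constant $C$ cancels), so the $\delta$-fine sums over $[s+\om,t+\om]$ and $[s,t]$ match under the gauge correspondence you describe. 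Uniqueness of the periodic solution then follows because every $\om$-periodic solution is bounded (by \eqref{vcf} and property (b) of $U$ it is bounded on $[0,\om]$, hence on $\real$). Where the paper goes a genuinely different way is in its Section~4 reworking of this result: there, Theorem~\ref{NSC} shows that $1$ is not an eigenvalue of the monodromy matrix $\U(\om)$, so $[I-U(\om,0)]$ is invertible, and a Fredholm-alternative-type result (\cite[Prop.~9.10]{BFM}) then yields the unique $\om$-periodic solution with the explicit initial value in \eqref{ivpp}. That route buys two things your argument does not: it dispenses with the existence and boundedness hypotheses on the improper Kurzweil--Stieltjes integrals $W_1,W_2$ and with the assumption that the projection $P$ is uniquely determined, and it localizes the initial condition to an integral over the compact interval $[0,\om]$. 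Conversely, your argument is self-contained modulo Proposition~\ref{nhbs} and identifies the periodic solution directly with the unique bounded solution, which is consistent with Remark~\ref{rivp} and the computation following \eqref{ivpnh}.
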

	\begin{remark}
		As in the previous Remark~\ref{rivp}, the unique $\om$-periodic solution which is obtained by the Proposition~\ref{psol} is indeed the unique solution of the I.V.P \eqref{ivpnh}. A key assumption of Proposition \ref{psol} is that the  $\omega$--periodic GLDE \eqref{hGLDE} has an exponential dichotomy on $\real$. Nevertheless, there are no suggestions how this property could be verified. In this context, Theorem \ref{NSC} provides a necessary and sufficient condition ensuring exponential dichotomy on $\real$, which prompt us to revisit a consequence of Proposition \ref{psol}, namely, an alternative and simpler description of the initial condition of the unique $\omega$--periodic solution.
	\end{remark}

	In the next, we will prove that in the periodic case, the initial condition of the I.V.P \eqref{ivpnh} can be determined by the monodromy matrix $\U(\om)=U(\om,0)$. For this purpose, let us denote the initial condition of the I.V.P \eqref{ivpnh} by
	\[
	x_0:=x(0)=-\int_{-\infty}^{0}{\rm d}[P\U^{-1}(s)]{(f(s)-f(0))}+\int_{0}^{\infty}{\rm d}[(I-P)\U^{-1}(s)]{(f(s)-f(0))}.
	\]
	
	The unique $\om$-periodic solution $x(\cdot,0,x_0)$ of the I.V.P \eqref{ivpnh} satisfies $x(\om,0,x_0)=x(0,0,x_0)=x_0$, and 
	\[
	x(t,0,x_0)=U(t,0)x_0 + f(t)-f(0)-\int_{0}^{t}{\rm d}[U(t,s)](f(s)-f(0)), \qquad \text{for all $t\in\real$}.
	\] 
	It follows by the $\om$-periodicity of $f$ that
	\[
	x(\om,0,x_0)=U(\om,0)x_0-\int_{0}^{\om}{\rm d}[U(\om,s)](f(s)-f(0)).
	\]
	Now, by using the fact that $x(\om,0,x_0)=x_0$ we get 
	\[
	[I-U(\om,0)]x_0=-\int_{0}^{\om}{\rm d}[U(\om,s)](f(s)-f(0)).
	\]
	
	As we are assuming that the $\omega$--periodic GLDE \eqref{hGLDE} has an exponential dichotomy on $\mathbb{R}$, Theorem \ref{NSC} implies that $[I-U(\omega,0)]$
	is an invertible matrix since $\lambda=1$ cannot be an eigenvalue of the monodromy matrix. Hence, we have
	\[
	x_0=-[I-U(\om,0)]^{-1}\int_{0}^{\om}{\rm d}[U(\om,s)](f(s)-f(0)).
	\]
	Thus, we conclude that under hypotheses of Proposition~\ref{psol}, the unique $\om$-periodic solution of the GLDE \eqref{nhGLDE} is actually the unique solution of the I.V.P 
	\begin{equation}\label{ivpp}
	\left\{\begin{array}{l}
	\dfrac{dx}{d\tau}=D[A(t)+f(t)]\\
	\displaystyle x(0)=-[I-U(\om,0)]^{-1}\int_{0}^{\om}{\rm d}[U(\om,s)](f(s)-f(0)).
	\end{array}\right.    
	\end{equation}
	
	\begin{remark}
		The fact that the initial condition $x_0$ is given by the monodromy matrix and the integral involved on it can be  considered on the compact interval $[0,\om]$ also can be deduced by simply calculation of the integrals
		\[
		-\int_{-\infty}^{0}{\rm d}[P\U^{-1}(s)]{(f(s)-f(0))},  \text{ and} \; \int_{0}^{\infty}{\rm d}[(I-P)\U^{-1}(s)]{(f(s)-f(0))}.
		\]
		Indeed, let us denote $\varphi(s)=f(s)-f(0)$ for all $s\in\real$. At first, we pointed out an useful equality. Since the unique $\om$-periodic solution $x(\cdot,0,x_0)$ of the I.V.P \eqref{ivpnh} satisfies $x(-\om,0,x_0)=x(0,0,x_0)=x_0$, proceeding as above, we can prove that
		\begin{equation}\label{minusom}
		[I-U(\om,0)]^{-1}\int_{0}^{\om}{\rm d}[U(\om,s)]\vp(s)=[I-U(-\om,0)]^{-1}\int_{0}^{-\om}{\rm d}[U(-\om,s)]\vp(s).
		\end{equation}
		On the other hand, by using the $\om$-byperiodicity \eqref{biper} of $U(\cdot)$ and the $\om$-periodicity of $\vp$, we have 
		\begin{equation*}
		\begin{split}
		\int_{0}^{\infty}{\rm d}[\U^{-1}(s)]\vp(s)&=\int_{0}^{\om}{\rm d}[\U^{-1}(s)]\vp(s)+ \int_{\om}^{\infty}{\rm d}[\U^{-1}(s)]\vp(s)\\
		&=U(0,\om)\int_{0}^{\om}{\rm d}[U(\om,s)]\vp(s)+ U(0,\om)\int_{0}^{\infty}{\rm d}[U(\om,s+\om)]\vp(s+\om)\\
		&=U(0,\om)\int_{0}^{\om}{\rm d}[U(\om,s)]\vp(s)+ U(0,\om)\int_{0}^{\infty}{\rm d}[\U^{-1}(s)]\vp(s).
		\end{split}
		\end{equation*}
		This implies that 
		\[
		[I-U(\om,0)]\int_{0}^{\infty}{\rm d}[\U^{-1}(s)]\vp(s)=-\int_{0}^{\om}{\rm d}[U(\om,s)]\vp(s).
		\]
		From the exponential dichotomy assumption of the homogeneous $\om$-periodic GLDE \eqref{hGLDE}, it follows  the invertibility of the matrix $[I-U(\om,0)]$ and we get 
		\begin{equation}\label{eqom}
		\int_{0}^{\infty}{\rm d}[(I-P)\U^{-1}(s)]\vp(s)=-(I-P)[I-U(\om,0)]^{-1}\int_{0}^{\om}{\rm d}[U(\om,s)]\vp(s).
		\end{equation}
		In addition, from Eq. \eqref{minusom} and using a similar decomposition as above, we obtain
		\begin{equation}\label{eqmom}
		\begin{split}
		\int_{-\infty}^{0}{\rm d}[P\U^{-1}(s)]\vp(s)&=P[I-U(-\om,0)]^{-1}\int_{0}^{-\om}{\rm d}[U(-\om,s)]\vp(s)\\
		&=P[I-U(\om,0)]^{-1}\int_{0}^{\om}{\rm d}[U(\om,s)]\vp(s).
		\end{split}
		\end{equation}
		Therefore, from Eq. \eqref{eqom} and Eq. \eqref{eqmom} we deduce 
		\begin{equation*}
		\begin{split}
		x_0&=-\int_{-\infty}^{0}{\rm d}[P\U^{-1}(s)]\vp(s)+\int_{0}^{\infty}{\rm d}[(I-P)\U^{-1}(s)]\vp(s)\\
		&=-[I-U(\om,0)]^{-1}\int_{0}^{\om}{\rm d}[U(\om,s)]\vp(s),
		\end{split}
		\end{equation*}
		getting the desired result.
	\end{remark}
	
	We finalize this paper with a comment concerned the hypotheses of the preceding admissi\-bi\-li\-ty result  Proposition~\ref{psol} (\cite[Prop.4.8]{BFS}). In order to obtain the unique $\om$-periodic solution of the inhomogeneous GLDE \eqref{nhGLDE} and as a consequence of Theorem~\ref{NSC}, we notice that gathering the exponential dichotomy assumption (D) and a recent result in \cite[Chapter~9]{BFM}, we are capable to avoid the existence premise of the Kurzweil--Stieltjes integral on unbounded intervals and also that the projection $P$ is uniquely determined.
	
	Let us be more specific, by using Theorem~\ref{NSC} we deduce that the exponential dichotomy on $\real$ implies that the eigenvalues of the monodromy matrix $\U(\om)$ are not in the unit circle, which in turn implies the invertibility of the matrix $[I-U(\om,0)]$. Therefore, the unique $\om$-periodic solution of the $\om$--periodic GLDE \eqref{hGLDE} is the null solution. Hence, by using \cite[Proposition~9.10]{BFM} we get that the inhomogeneous GLDE \eqref{nhGLDE} has a unique $\om$-periodic solution, which is indeed the unique solution of the I.V.P \eqref{ivpp}. In conclusion, we can reformulate the enunciate of Proposition~\ref{psol} as follows:
	\begin{prop}
		Let $C\in\mathcal{L}(\rn)$ be a constant matrix. Assume that $A(t+\om)-A(t)=C$, for all $t\in J$, where $\om>0$, the function $f$ belongs to $G_{\om}(\real,\rn)$, and condition (D) holds. Then the inhomogeneous GLDE \eqref{nhGLDE} admits a unique $\om$-periodic solution. 
	\end{prop}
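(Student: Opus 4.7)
The plan is to combine Theorem \ref{NSC} with the admissibility-type result \cite[Proposition 9.10]{BFM} cited in the preceding discussion. First, condition (D) together with the hypothesis $A(t+\omega)-A(t)=C$ places us in the setting of Theorem \ref{NSC}, which asserts that exponential dichotomy of the homogeneous GLDE \eqref{hGLDE} on $\real$ is equivalent to the monodromy matrix $U(\omega,0)$ having no eigenvalue on the unit circle. In particular, $\lambda=1$ is not a Floquet multiplier, so the matrix $[I-U(\omega,0)]$ is invertible.

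Next, I would establish that the only $\omega$-periodic solution of the homogeneous GLDE \eqref{hGLDE} is the null solution. Indeed, any $\omega$-periodic solution $x(\cdot)$ with $x(0)=x_0$ must satisfy $x(\omega)=\U(\omega)x_0=x_0$, hence $[I-U(\omega,0)]x_0=0$. The invertibility obtained in the previous step forces $x_0=0$, and by uniqueness of solutions of the homogeneous I.V.P we conclude $x\equiv 0$.

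With this triviality of the periodic kernel in hand, I would invoke \cite[Proposition 9.10]{BFM}, which (as mentioned by the authors) yields a unique $\omega$-periodic solution of the inhomogeneous $\omega$-periodic GLDE \eqref{nhGLDE} whenever the associated homogeneous equation admits only the null $\omega$-periodic solution. This directly provides existence and uniqueness, bypassing both the integrability hypotheses on unbounded intervals and the unique-determination of the projector $P$ required by Proposition \ref{psol}.

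As a closing observation, the initial condition of this unique periodic solution can be identified explicitly by imposing $x(\omega,0,x_0)=x_0$ in the variation of constants formula \eqref{vcf} and exploiting the $\omega$-periodicity of $f$, recovering $x_0=-[I-U(\omega,0)]^{-1}\int_{0}^{\omega}{\rm d}[U(\omega,s)](f(s)-f(0))$ and thus the I.V.P \eqref{ivpp}. The main subtlety, rather than an actual obstacle, is the clean separation of the two logical steps: extracting triviality of the $\omega$-periodic kernel from the exponential dichotomy via the Floquet-multiplier criterion, and then feeding this triviality into the admissibility result of \cite{BFM}. Both ingredients are already at our disposal, so the proof reduces to this short chain of implications.
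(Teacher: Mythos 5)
Your proposal is correct and follows essentially the same route as the paper: Theorem~\ref{NSC} gives the invertibility of $[I-U(\om,0)]$, hence the homogeneous $\om$--periodic GLDE \eqref{hGLDE} admits only the null $\om$-periodic solution, and \cite[Proposition~9.10]{BFM} then yields the unique $\om$-periodic solution of \eqref{nhGLDE}. Your explicit check that $[I-U(\om,0)]x_0=0$ forces $x_0=0$ simply spells out a step the paper leaves implicit.
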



\begin{thebibliography}{99}
		
		\bibitem{BFM} Ap. Silva, M., et al. (2021). Periodicity. In Bonotto, E. M., et al. (eds.), Generalized Ordinary Differential Equations in Abstract Spaces and Applications, John Wiley $\&$ Sons, Hoboken, pp. 295--316.
		
		
		\bibitem{BFS} Bonotto, E. M., et al. (2018).
		Dichotomies for generalized ordinary differential equations and applications. J. Differential Equations 264, 3131--3173.
		
		\bibitem{BFS2} Bonotto, E. M., et al. (2020).
		Robustness of exponential dichotomies for generalized ordinary differential equations. J. Dynam. Differential Equations 32, 2021--2060.
		
		
		\bibitem{D} Dieudonn\'e, J. (1969). Foundations of Modern Analysis, Academic Press, New York.
		
		

		\bibitem{Hnilica} Hnilica, J. (1976).
		Verallgemeinerte Hill'sche diffeerentialgleichung. \v{C}asopis p\v{e}st. mat. 101, 293--302.
		
		
		
		\bibitem{MST} Monteiro, G. A., et al. (2018). Kurzweil-Stieltjes Integral and its Applications, World Scientific, New Jersey.
		
		
		\bibitem{MT2} Monteiro, G. A., and  Tvrd\'y, M. (2013). Generalized linear differential equations in a Banach space: continuous dependence on a parameter. Discrete Contin. Dyn. Syst. 33, 283--303.
		
		\bibitem{MT1}  Monteiro, G. A., and  Tvrd\'y, M. (2012). On Kurzweil--Stieltjes integral in Banach space, Math. Bohem. 137, 365--381.
		
		
		\bibitem{Sacker} Sacker, R. J., and Sell, G. (1974). 
		Existence of Dichotomies and invariant splittings for linear differential systems I. J. Differential Equations 15, 429--458.
		
		
		
		
		\bibitem{SCHWABIK} Schwabik, \v{S}. (1996). Abstract Perron-Stieltjes integral. Math. Bohem. 121, 425--447.
		
		\bibitem{SchwF} Schwabik, \v{S}. (1973). Floquetova teorie pro zobecn\v{e}n\'e diferenci\'aln\'i rovnice. \v{C}asopis p\v{e}st. mat. 98, 416--418.
		
		\bibitem{SCHWABIK1} Schwabik, \v{S}. (1992). Generalized Ordinary Differential Equations, Ser. Real Anal., Vol. 5, World Scientific, Singapore.
		
		
		\bibitem{SCHWABIK3} Schwabik, \v{S}., et al. (1979). Differential and Integral Equations. Boundary Value Problems and Adjoints, Academia Praha, Reidel Dordrecht.
		
		
		
		
	\end{thebibliography}
\end{document}